\newtheorem{theorem}{Theorem}[section]
\newtheorem{corollary}[theorem]{Corollary}
\newtheorem{lemma}[theorem]{Lemma}
\theoremstyle{definition}
\DeclareMathOperator{\calA}{\mathcal{A}}
\DeclareMathOperator{\scrP}{\mathscr{P}}
\DeclareMathOperator{\symg}{sym_{\mathcal{G}}}
\DeclareMathOperator{\dom}{dom}
\DeclareMathOperator{\ran}{ran}
\begin{document}

\baselineskip=17pt

\title{A surjection from square onto power}

\author{Yinhe Peng}
\address{Academy of Mathematics and Systems Science\\
Chinese Academy of Sciences\\
East Zhong Guan Cun Road No.~55\\
Beijing 100190\\
People's Republic of China}
\email{pengyinhe@amss.ac.cn}

\author{Guozhen Shen}
\address{School of Philosophy\\
Wuhan University\\
No.~299 Bayi Road\\
Wuhan 430072\\
Hubei Province\\
People's Republic of China}
\email{shen\_guozhen@outlook.com}

\author{Liuzhen Wu}
\address{Academy of Mathematics and Systems Science\\
Chinese Academy of Sciences\\
East Zhong Guan Cun Road No.~55\\
Beijing 100190\\
People's Republic of China}
\email{lzwu@math.ac.cn}

\date{}

\begin{abstract}
It is shown that the existence of an infinite set $A$ such that
$A^2$ maps onto $2^A$ is consistent with $\mathsf{ZF}$.
\end{abstract}

\subjclass[2010]{Primary 03E35; Secondary 03E10, 03E25}

\keywords{permutation model, surjection, Cantor's theorem, axiom of choice}

\thanks{Peng was partially supported by NSFC No.~11901562 and a program of the Chinese Academy of Sciences.
Shen was partially supported by NSFC No.~12101466. Wu was partially supported by NSFC No.~11871464.}

\maketitle

\section{Introduction}

For a set $A$, let $2^A$ or $\scrP(A)$ denote the power set of $A$ (i.e., the set of all subsets of $A$),
let $A^2$ denote the Cartesian product $A\times A$, and let $[A]^2$ denote the set of all $2$-element subsets of $A$.

In~\cite{Cantor1892}, Cantor proves that, for all sets $A$, there are no surjections from $A$ onto $\scrP(A)$,
from which it follows that there are no injections from $\scrP(A)$ into $A$. In~\cite{Specker1954},
Specker proves in $\mathsf{ZF}$ (i.e., the Zermelo--Fraenkel set theory without the axiom of choice)
a generalization of the latter result, which states that, for all infinite sets $A$,
there are no injections from $\scrP(A)$ into $A^2$. Specker's result is trivial in the presence of the axiom of choice,
since it follows from the axiom of choice that there is a bijection between $A^2$ and $A$ for all infinite sets $A$.
It is natural to ask whether it is provable in $\mathsf{ZF}$ that,
\begin{equation}\label{sh01}
\text{for all infinite sets $A$, there are no surjections from $A^2$ onto $\scrP(A)$.}
\end{equation}
This question is known as the dual Specker problem and is asked in \cite{Truss1973}
(cf.~also \cite[p.~133]{Halbeisen2017} or \cite[Question~5.4]{ShenYuan2020}).
Recently, in~\cite{PengShen2022}, Peng and Shen prove in $\mathsf{ZF}$ a weaker version of \eqref{sh01},
which states that, for all infinite sets $A$, there are no surjections from $\omega\times A$ onto $\scrP(A)$.

In this paper, we prove by the method of permutation models that the existence of an infinite set $A$
such that there is a surjection from $[A]^2$ onto $\scrP(A)$ is consistent with $\mathsf{ZF}$.
This answers an open question proposed in~\cite{Halbeisen2018} (cf.~also \cite[Question~5.5]{ShenYuan2020}),
and gives a negative answer to the dual Specker problem, since clearly there is a surjection from $A^2$ onto $[A]^2$.

\section{Permutation models}

We refer the readers to \cite[Chap.~8]{Halbeisen2017} or \cite[Chap.~4]{Jech1973}
for an introduction to the theory of permutation models.
Permutation models are not models of $\mathsf{ZF}$;
they are models of $\mathsf{ZFA}$ (i.e., the Zermelo--Fraenkel set theory with atoms).
Nevertheless, they indirectly give, via the Jech--Sochor theorem
(cf.~\cite[Theorem~17.2]{Halbeisen2017} or \cite[Theorem~6.1]{Jech1973}), models of $\mathsf{ZF}$.

Let $A$ be the set of atoms, let $\mathcal{G}$ be a group of permutations of $A$,
and let $\mathfrak{F}$ be a normal filter on $\mathcal{G}$.
We write $\symg(x)$ for the set $\{\pi\in\mathcal{G}\mid\pi x=x\}$,
where $\pi\in\mathcal{G}$ extends to a permutation of the universe by
\[
\pi x=\{\pi y\mid y\in x\}.
\]
Then $x$ belongs to the permutation model $\mathcal{V}$ determined by $\mathcal{G}$ and $\mathfrak{F}$
if and only if $x\subseteq\mathcal{V}$ and $\symg(x)\in\mathfrak{F}$.

\section{A new permutation model}
In this section, we construct a permutation model in which
there exists an infinite set $A$ such that $[A]^2$ maps onto $\scrP(A)$.
For convenience, we shall work in $\mathsf{ZFA}+\mathsf{AC}+\forall n\in\omega(2^{\aleph_n}=\aleph_{n+1})$.
The strategy of our construction is as follows:

We construct a sequence of functions $h_n$ ($n\in\omega$) so that each $h_n$
is a surjection from $[\omega_{n+1}\setminus\omega_n]^2$ onto $\scrP(\omega_n)\times\scrP(\omega)$
and satisfies a certain kind of back-and-forth condition (see the next lemma).
Then we take $A=\omega_\omega$, $h=\bigcup_{n\in\omega}h_n$,
let $\mathcal{G}$ be the group of all ``automorphisms'' of $\langle A,h\rangle$,
and let $\mathfrak{F}$ be the normal filter on $\mathcal{G}$ generated
by those subgroups of $\mathcal{G}$ that leave some $\omega_n$ fixed
(which will be made clear in the sequel).
The back-and-forth condition guarantees that $\langle A,h\rangle$ is in a sense ultrahomogeneous;
that is, every partial automorphism of $\langle A,h\rangle$ of smaller cardinality extends to an automorphism.
By the ultrahomogeneity of $\langle A,h\rangle$, we can conclude that,
in the permutation model determined by $\mathcal{G}$ and $\mathfrak{F}$,
every subset of $A$ can be encoded by an element of $\scrP(\omega_n)\times\scrP(\omega)$ for some $n\in\omega$,
and hence $h$ induces a surjection from $[A]^2$ onto $\scrP(A)$.

\begin{lemma}\label{sh02}
For every $n\in\omega$, there exists a surjection
$h_n:[\omega_{n+1}\setminus\omega_n]^2\twoheadrightarrow\scrP(\omega_n)\times\scrP(\omega)$
satisfying the following condition: For all $\alpha<\omega_{n+1}$,
all injections $f:\alpha\to\omega_{n+1}\setminus\omega_n$,
and all functions $g:\alpha\to\scrP(\omega_n)\times\scrP(\omega)$,
there exists a $\beta\in\omega_{n+1}\setminus(\omega_n\cup\ran(f))$
such that $h_n(\{\beta,f(\gamma)\})=g(\gamma)$ for every $\gamma<\alpha$.
\end{lemma}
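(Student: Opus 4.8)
The plan is to build $h_n$ by transfinite recursion of length $\aleph_{n+1}$, handling one instance of the back-and-forth condition at each stage while keeping track of only a small set of already-committed pairs. Write $X=\omega_{n+1}\setminus\omega_n$ and $T=\scrP(\omega_n)\times\scrP(\omega)$. Under the assumption $2^{\aleph_n}=\aleph_{n+1}$ we have $|X|=|T|=\aleph_{n+1}$, and since $\aleph_{n+1}$ is a successor cardinal it is regular. The role of the hypothesis is to bound the number of requirements: for each $\alpha<\omega_{n+1}$ we have $|\alpha|\le\aleph_n$, so the number of pairs $(f,g)$ with $f\colon\alpha\to X$ an injection and $g\colon\alpha\to T$ is at most $|X|^{|\alpha|}\cdot|T|^{|\alpha|}=\aleph_{n+1}^{\aleph_n}=(2^{\aleph_n})^{\aleph_n}=2^{\aleph_n}=\aleph_{n+1}$. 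Summing over the $\aleph_{n+1}$ possible values of $\alpha$ still gives $\aleph_{n+1}$, so I can fix an enumeration $\langle(f_\xi,g_\xi):\xi<\omega_{n+1}\rangle$ of all such requirements.

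First I would carry out the recursion. Suppose at stage $\xi<\omega_{n+1}$ a partial function $p_\xi$ from $[X]^2$ to $T$ has been built as the union of the commitments made at the earlier stages $\eta<\xi$, each stage having defined at most $|\dom(f_\eta)|\le\aleph_n$ pairs. Then $p_\xi$ is defined on at most $|\xi|\cdot\aleph_n\le\aleph_n$ pairs, in particular on fewer than $\aleph_{n+1}$ pairs. I now wish to choose a fresh $\beta_\xi\in X\setminus\ran(f_\xi)$ such that none of the pairs $\{\beta_\xi,f_\xi(\gamma)\}$ (for $\gamma<\dom(f_\xi)$) already lies in $\dom(p_\xi)$. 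The elements of $X$ ruled out by membership in $\ran(f_\xi)$ number at most $\aleph_n$, and each pair already in $\dom(p_\xi)$ forbids at most two further values of $\beta$; hence the forbidden set has size at most $\aleph_n<\aleph_{n+1}=|X|$, and by regularity of $\aleph_{n+1}$ a suitable $\beta_\xi$ exists. For this $\beta_\xi$ I set $p_{\xi+1}(\{\beta_\xi,f_\xi(\gamma)\})=g_\xi(\gamma)$ for every $\gamma<\dom(f_\xi)$; these pairs are genuine $2$-element subsets and are pairwise distinct (as $f_\xi$ is injective and $\beta_\xi\notin\ran(f_\xi)$) and were previously undefined, so no earlier commitment is overwritten.

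After all $\aleph_{n+1}$ stages, let $h'$ be the union of all commitments and extend it to a total $h_n\colon[X]^2\to T$ by assigning, say, a fixed value of $T$ to every remaining pair. Since the commitment made at the stage handling $(f,g)$ is never later destroyed, the final $h_n$ satisfies $h_n(\{\beta_\xi,f_\xi(\gamma)\})=g_\xi(\gamma)$ for all $\gamma<\dom(f_\xi)$, which is precisely the displayed condition for the requirement $(f_\xi,g_\xi)$; as the enumeration exhausts all $(f,g)$, the back-and-forth condition holds in full. Surjectivity onto $T$ is then automatic: given any $t\in T$, apply the condition with $\alpha=1$, $f(0)$ an arbitrary point of $X$, and $g(0)=t$, producing some $\beta$ with $h_n(\{\beta,f(0)\})=t$.

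The step I expect to be the main obstacle is the freshness count in the recursion, namely verifying that at every one of the $\aleph_{n+1}$ stages the forbidden set of candidates $\beta$ stays below $\aleph_{n+1}$. This is exactly where both the cardinal computation $\aleph_{n+1}^{\aleph_n}=\aleph_{n+1}$ (so that the enumeration has length only $\aleph_{n+1}$ and each stage commits fewer than $\aleph_{n+1}$ pairs) and the regularity of $\aleph_{n+1}$ (so that fewer than $\aleph_{n+1}$ forbidden elements cannot exhaust $X$) are essential; everything else is routine bookkeeping about preserving commitments.
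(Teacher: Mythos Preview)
Your argument is correct. The cardinal arithmetic, the freshness count, and the final surjectivity check are all sound; the one cosmetic quibble is that the existence of $\beta_\xi$ follows simply from $\aleph_n<\aleph_{n+1}$ and does not actually require regularity at that point.

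Your route differs from the paper's in its organization, though the underlying idea (a length-$\omega_{n+1}$ recursion satisfying all back-and-forth requirements) is the same. The paper does \emph{not} enumerate requirements; instead it builds the carrier set itself by a Fra\"iss\'e-style limit: starting from $X_0=\varnothing$, at successor stage $\delta+1$ it adjoins, for \emph{every} triple $(\alpha,f,g)$ with $\ran(f)\subseteq X_\delta$, a brand-new witness $z_{\delta,\alpha,f,g}$, and only at the very end transports the resulting structure onto $\omega_{n+1}\setminus\omega_n$ via a bijection. GCH is used there to keep $|X_\delta|\le\aleph_{n+1}$ at each step, and regularity of $\omega_{n+1}$ is used to argue that any given $f$ has range inside some $X_\delta$. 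Your version works directly on the target set and handles one requirement per stage, which makes the bookkeeping and the ``fresh $\beta$'' argument more explicit; the paper's version avoids enumerating requirements at the cost of building an abstract carrier first. Both are standard and equally valid.
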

\begin{proof}
Let $n\in\omega$. We define $\langle X_\delta\rangle_{\delta<\omega_{n+1}}$ and $\langle p_\delta\rangle_{\delta<\omega_{n+1}}$
by recursion as follows.
\begin{itemize}[leftmargin=*]
\item $X_0=p_0=\varnothing$.
\item Let $\delta<\omega_{n+1}$ and assume that $X_\delta$ and $p_\delta$ have been defined so that
$|X_\delta|\leqslant\aleph_{n+1}$ and $p_\delta:[X_\delta]^2\to\scrP(\omega_n)\times\scrP(\omega)$.
We use $\varphi(\delta,\alpha,f,g)$ to denote the following formula:
\[
\alpha<\omega_{n+1},f:\alpha\to X_\delta\text{ is injective, and }g:\alpha\to\scrP(\omega_n)\times\scrP(\omega).
\]
For all $\alpha,f,g$ such that $\varphi(\delta,\alpha,f,g)$, choose a $z_{\delta,\alpha,f,g}\notin X_\delta$ so that
\[
z_{\delta,\alpha,f,g}=z_{\delta,\alpha',f',g'}\text{ only if }\alpha=\alpha',f=f',\text{ and }g=g'.
\]
Define
\[
X_{\delta+1}=X_\delta\cup\{z_{\delta,\alpha,f,g}\mid\varphi(\alpha,f,g)\},
\]
and let $p_{\delta+1}$ be the function from $[X_{\delta+1}]^2$ to $\scrP(\omega_n)\times\scrP(\omega)$ defined by
\[
p_{\delta+1}(\{x,y\})=
\begin{cases}
p_\delta(\{x,y\})                     & \text{if $\{x,y\}\in[X_\delta]^2$,}\\
g(\gamma)                             & \text{if $\{x,y\}=\{z_{\delta,\alpha,f,g},f(\gamma)\}$ for some $\gamma<\alpha$,}\\
\langle\varnothing,\varnothing\rangle & \text{otherwise.}
\end{cases}
\]
Since we are working in $\mathsf{ZFA}+\mathsf{AC}+\forall n\in\omega(2^{\aleph_n}=\aleph_{n+1})$,
it follows that $|X_{\delta+1}|\leqslant\aleph_{n+1}$.
\item If $\delta<\omega_{n+1}$ is a limit ordinal, define
\[
X_\delta=\bigcup_{\gamma<\delta}X_\gamma\qquad\text{and}\qquad p_\delta=\bigcup_{\gamma<\delta}p_\gamma.
\]
\end{itemize}
Finally, we define
\[
X=\bigcup_{\delta<\omega_{n+1}}X_\delta\qquad\text{and}\qquad p=\bigcup_{\delta<\omega_{n+1}}p_\delta.
\]
Clearly, $|X|=\aleph_{n+1}$ and $p$ is a surjection from $[X]^2$ onto $\scrP(\omega_n)\times\scrP(\omega)$.
Furthermore, for all $\alpha<\omega_{n+1}$, all injections $f:\alpha\to X$,
and all functions $g:\alpha\to\scrP(\omega_n)\times\scrP(\omega)$,
since $\omega_{n+1}$ is regular, there is a $\delta<\omega_{n+1}$ such that $\ran(f)\subseteq X_\delta$,
and thus $\varphi(\delta,\alpha,f,g)$, which implies that $z_{\delta,\alpha,f,g}$ is an element of $X$
such that $p(\{z_{\delta,\alpha,f,g},f(\gamma)\})=p_{\delta+1}(\{z_{\delta,\alpha,f,g},f(\gamma)\})=g(\gamma)$ for every $\gamma<\alpha$.
Finally, since $|X|=|\omega_{n+1}\setminus\omega_n|$,
we can, without loss of generality, identify $X$ with $\omega_{n+1}\setminus\omega_n$ and take $h_n=p$.
\end{proof}

For each $n\in\omega$, let $\calA_n=\langle\omega_{n+1},h_n\rangle$.
A \emph{partial automorphism} of $\calA_n$ is an injection $\pi$ from a subset of $\omega_{n+1}$ into $\omega_{n+1}$
satisfying the following two conditions:
\begin{itemize}[leftmargin=*]
\item $\omega_n\subseteq\dom(\pi)$ and $\pi[\omega_n]=\omega_n$;
\item for all $\{\xi,\eta\}\in[\dom(\pi)\setminus\omega_n]^2$,
      if $h_n(\{\xi,\eta\})=\langle B,r\rangle$, where $B\subseteq\omega_n$ and $r\subseteq\omega$,
      then $h_n(\{\pi(\xi),\pi(\eta)\})=\langle\pi[B],r\rangle$.
\end{itemize}
An \emph{automorphism} of $\calA_n$ is a partial automorphism which is a permutation of $\omega_{n+1}$.

\begin{lemma}\label{sh03}
For every $n\in\omega$, each partial automorphism $\pi_0$ of $\calA_n$
with $|\dom(\pi_0)|<\aleph_{n+1}$ extends to an automorphism of $\calA_n$.
\end{lemma}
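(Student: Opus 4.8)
The plan is to build the desired automorphism by a back-and-forth recursion of length $\omega_{n+1}$, using Lemma~\ref{sh02} as the one-point extension step. Fix $n$ and a partial automorphism $\pi_0$ with $|\dom(\pi_0)|<\aleph_{n+1}$. First I would isolate the crucial extension property: given any partial automorphism $\pi$ with $|\dom(\pi)|<\aleph_{n+1}$ and any $\beta\in\omega_{n+1}\setminus(\omega_n\cup\dom(\pi))$, there is a $\beta'\in\omega_{n+1}\setminus(\omega_n\cup\ran(\pi))$ such that $\pi\cup\{\langle\beta,\beta'\rangle\}$ is again a partial automorphism. To see this, enumerate $\dom(\pi)\setminus\omega_n$ as $\{\eta_\gamma\mid\gamma<\alpha\}$ for some $\alpha<\omega_{n+1}$ (possible since this set has size at most $\aleph_n$), set $f(\gamma)=\pi(\eta_\gamma)$---an injection into $\omega_{n+1}\setminus\omega_n$ because $\pi[\omega_n]=\omega_n$---and, writing $h_n(\{\beta,\eta_\gamma\})=\langle B_\gamma,r_\gamma\rangle$, set $g(\gamma)=\langle\pi[B_\gamma],r_\gamma\rangle$. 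Lemma~\ref{sh02} then yields a $\beta'$ outside $\omega_n\cup\ran(f)$ with $h_n(\{\beta',\pi(\eta_\gamma)\})=\langle\pi[B_\gamma],r_\gamma\rangle$ for all $\gamma<\alpha$; since $\ran(\pi)=\omega_n\cup\ran(f)$, this $\beta'$ also avoids $\ran(\pi)$, so the extension is injective and satisfies the defining conditions of a partial automorphism (only the newly formed pairs $\{\beta,\eta_\gamma\}$ need checking, and these are exactly what $g$ was chosen to handle).

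Next I would record that the inverse of a partial automorphism is a partial automorphism: the requirement $\pi^{-1}[\omega_n]=\omega_n$ is immediate from $\pi[\omega_n]=\omega_n$, and for $\xi=\pi(\xi_0),\eta=\pi(\eta_0)$ in $\ran(\pi)\setminus\omega_n$ the forward clause for $\{\xi_0,\eta_0\}$ says that $h_n(\{\xi_0,\eta_0\})=\langle B,r\rangle$ implies $h_n(\{\xi,\eta\})=\langle\pi[B],r\rangle$; reading this in reverse and using $\pi^{-1}[\pi[B]]=B$ yields exactly the partial-automorphism clause for $\pi^{-1}$. Consequently the symmetric ``back'' step---forcing a prescribed point into the range of $\pi$---is just the extension property of the first paragraph applied to $\pi^{-1}$.

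With these two facts in hand, I would run the standard back-and-forth. Fix an enumeration $\langle\alpha_\xi\mid\xi<\omega_{n+1}\rangle$ of $\omega_{n+1}$ and recursively define an increasing chain of partial automorphisms $\pi_0\subseteq\pi_1\subseteq\cdots$, each with domain of size $<\aleph_{n+1}$. At a successor stage I would first apply the extension property to put $\alpha_\xi$ into the domain (doing nothing if $\alpha_\xi\in\omega_n\cup\dom(\pi_\xi)$), and then apply it to the inverse to put $\alpha_\xi$ into the range; at limit stages I would take unions. One checks that the two defining clauses of a partial automorphism are preserved under increasing unions, since they concern $\omega_n$ and individual pairs, each of which already lies in some earlier $\pi_\xi$; and the cardinal bound $|\dom(\pi_\xi)|\leqslant\aleph_n<\aleph_{n+1}$ persists because $\aleph_n\cdot\aleph_n=\aleph_n$ and $|\xi|\leqslant\aleph_n$ for every $\xi<\omega_{n+1}$. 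Finally $\pi=\bigcup_{\xi<\omega_{n+1}}\pi_\xi$ has domain and range both equal to $\omega_{n+1}$ by the forth and back requirements, so it is a permutation of $\omega_{n+1}$, that is, an automorphism of $\calA_n$ extending $\pi_0$.

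The main obstacle is the extension property of the first paragraph; once it is recognized as a direct instance of the back-and-forth condition of Lemma~\ref{sh02}, the remaining bookkeeping together with the limit- and cardinality-checks are routine. A minor point to watch is that the new range element $\beta'$ must avoid all of $\ran(\pi)$, not merely $\ran(f)$; this is exactly why the clause $\pi[\omega_n]=\omega_n$ is built into the definition of partial automorphism, for it guarantees $\ran(\pi)\setminus\omega_n=\ran(f)$.
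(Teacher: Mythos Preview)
Your proof is correct and follows essentially the same back-and-forth strategy as the paper. The only cosmetic differences are that the paper alternates the ``forth'' and ``back'' steps at odd and even successor stages (using least missing elements rather than a fixed enumeration), whereas you isolate the one-point extension property explicitly, observe it applies to $\pi^{-1}$ as well, and then perform both steps at each successor stage; these are standard equivalent ways of organizing the same argument.
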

\begin{proof}
Let $n\in\omega$ and let $\pi_0$ be a partial automorphism of $\calA_n$ such that $|\dom(\pi_0)|<\aleph_{n+1}$.
We define an increasing sequence $\langle\pi_\delta\rangle_{\delta<\omega_{n+1}}$
of partial automorphisms of $\calA_n$ by recursion as follows.
\begin{itemize}[leftmargin=*]
\item Let $\delta<\omega_{n+1}$ and assume that
$\pi_\delta$ is a partial automorphism of $\calA_n$ such that $|\dom(\pi_\delta)|<\aleph_{n+1}$.

\noindent
\textsc{Case}~1 (back). $\delta$ is even. Let $\eta$ be the least element of $\omega_{n+1}\setminus\ran(\pi_\delta)$.
Let $\alpha$ and $f$ be the order-type and the enumerating function of $\dom(\pi_\delta)\setminus\omega_n$, respectively.
For each $\gamma<\alpha$, suppose that $h_n(\{\eta,\pi_\delta(f(\gamma))\})=\langle B_\gamma,r_\gamma\rangle$,
where $B_\gamma\subseteq\omega_n$ and $r_\gamma\subseteq\omega$.
Let $g$ be the function from $\alpha$ to $\scrP(\omega_n)\times\scrP(\omega)$
defined by
\[
g(\gamma)=\langle\pi_\delta^{-1}[B_\gamma],r_\gamma\rangle.
\]
Then, by Lemma~\ref{sh02}, there is a least $\beta\in\omega_{n+1}\setminus\dom(\pi_\delta)$
such that $h_n(\{\beta,f(\gamma)\})=g(\gamma)$ for every $\gamma<\alpha$. Define
\[
\pi_{\delta+1}=\pi_\delta\cup\{\langle\beta,\eta\rangle\}.
\]
Then $\pi_{\delta+1}$ is a partial automorphism of $\calA_n$,
since $\pi_\delta$ is a partial automorphism and for all $\zeta\in\dom(\pi_\delta)\setminus\omega_n$ we have
\[
h_n(\{\beta,\zeta\})=h_n(\{\beta,f(\gamma)\})=g(\gamma)=\langle\pi_{\delta+1}^{-1}[B_\gamma],r_\gamma\rangle
\]
and
\[
h_n(\{\pi_{\delta+1}(\beta),\pi_{\delta+1}(\zeta)\})=h_n(\{\eta,\pi_\delta(f(\gamma))\})=\langle B_\gamma,r_\gamma\rangle
\]
where $\zeta=f(\gamma)$ for some $\gamma<\alpha$.

\noindent
\textsc{Case}~2 (forth). $\delta$ is odd. Let $\xi$ be the least element of $\omega_{n+1}\setminus\dom(\pi_\delta)$.
Let $\alpha$ and $f$ be the order-type and the enumerating function of $\ran(\pi_\delta)\setminus\omega_n$, respectively.
For each $\gamma<\alpha$, suppose that $h_n(\{\xi,\pi_\delta^{-1}(f(\gamma))\})=\langle B_\gamma,r_\gamma\rangle$,
where $B_\gamma\subseteq\omega_n$ and $r_\gamma\subseteq\omega$.
Let $g$ be the function from $\alpha$ to $\scrP(\omega_n)\times\scrP(\omega)$
defined by
\[
g(\gamma)=\langle\pi_\delta[B_\gamma],r_\gamma\rangle.
\]
Then, by Lemma~\ref{sh02}, there is a least $\beta\in\omega_{n+1}\setminus\ran(\pi_\delta)$
such that $h_n(\{\beta,f(\gamma)\})=g(\gamma)$ for every $\gamma<\alpha$. Define
\[
\pi_{\delta+1}=\pi_\delta\cup\{\langle\xi,\beta\rangle\}.
\]
Then $\pi_{\delta+1}$ is a partial automorphism of $\calA_n$,
since $\pi_\delta$ is a partial automorphism and for all $\zeta\in\dom(\pi_\delta)\setminus\omega_n$ we have
\[
h_n(\{\xi,\zeta\})=h_n(\{\xi,\pi_\delta^{-1}(f(\gamma))\})=\langle B_\gamma,r_\gamma\rangle
\]
and
\[
h_n(\{\pi_{\delta+1}(\xi),\pi_{\delta+1}(\zeta)\})=h_n(\{\beta,f(\gamma)\})=g(\gamma)=\langle\pi_{\delta+1}[B_\gamma],r_\gamma\rangle
\]
where $\pi_\delta(\zeta)=f(\gamma)$ for some $\gamma<\alpha$.
\item If $\delta<\omega_{n+1}$ is a limit ordinal, define
\[
\pi_\delta=\bigcup_{\gamma<\delta}\pi_\gamma.
\]
\end{itemize}
Finally, we define
\[
\pi=\bigcup_{\delta<\omega_{n+1}}\pi_\delta.
\]
Clearly, $\pi$ is a partial automorphism of $\calA_n$ with $\dom(\pi)=\ran(\pi)=\omega_{n+1}$,
and hence is an automorphism of $\calA_n$.
\end{proof}

Let $\calA=\langle\omega_\omega,h\rangle$, where $h=\bigcup_{n\in\omega}h_n$.
An \emph{automorphism} of $\calA$ is a permutation $\pi$ of $\omega_\omega$
satisfying the following two conditions:
\begin{itemize}[leftmargin=*]
\item for all $n\in\omega$, $\pi[\omega_n]=\omega_n$;
\item for all $n\in\omega$ and all $\{\xi,\eta\}\in[\omega_{n+1}\setminus\omega_n]^2$,
      if $h(\{\xi,\eta\})=\langle B,r\rangle$, where $B\subseteq\omega_n$ and $r\subseteq\omega$,
      then $h(\{\pi(\xi),\pi(\eta)\})=\langle\pi[B],r\rangle$.
\end{itemize}

\begin{lemma}\label{sh04}
For all $n\in\omega$ and all $\xi,\eta\in\omega_{n+1}\setminus\omega_n$ with $\xi\neq\eta$,
there exists an automorphism of $\calA$ which fixes $\omega_n$ pointwise and moves $\xi$ to $\eta$.
\end{lemma}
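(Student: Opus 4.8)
The plan is to produce the automorphism of $\calA$ one level at a time, starting at level $n$ and ascending, with Lemma~\ref{sh03} supplying the extension at each single level. First I would note that $\pi_0=\mathrm{id}_{\omega_n}\cup\{\langle\xi,\eta\rangle\}$ is a partial automorphism of $\calA_n$: it is injective because $\xi,\eta\in\omega_{n+1}\setminus\omega_n$ are distinct and lie outside $\omega_n$; the requirement $\pi_0[\omega_n]=\omega_n$ holds since $\pi_0$ fixes $\omega_n$ pointwise; and since $\dom(\pi_0)\setminus\omega_n=\{\xi\}$ is a singleton, the clause concerning pairs in $[\dom(\pi_0)\setminus\omega_n]^2$ is vacuous. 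As $|\dom(\pi_0)|=\aleph_n<\aleph_{n+1}$, Lemma~\ref{sh03} yields an automorphism $\sigma_n$ of $\calA_n$ extending $\pi_0$; in particular $\sigma_n$ fixes $\omega_n$ pointwise and sends $\xi$ to $\eta$.

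Next I would construct, by recursion on $m\geqslant n$, a coherent sequence of automorphisms $\sigma_m$ of $\calA_m$ with $\sigma_{m+1}\rstr\omega_{m+1}=\sigma_m$. The observation driving the recursion is that any automorphism $\sigma_m$ of $\calA_m$ is automatically a partial automorphism of $\calA_{m+1}$: its domain is exactly $\omega_{m+1}\subseteq\omega_{m+2}$, so $\omega_{m+1}\subseteq\dom(\sigma_m)$ and $\sigma_m[\omega_{m+1}]=\omega_{m+1}$ both hold, while $\dom(\sigma_m)\setminus\omega_{m+1}=\varnothing$ renders the pair-condition (now taken with respect to $h_{m+1}$) vacuous. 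Since $|\dom(\sigma_m)|=\aleph_{m+1}<\aleph_{m+2}$, Lemma~\ref{sh03} applies again and produces an automorphism $\sigma_{m+1}$ of $\calA_{m+1}$ extending $\sigma_m$, which gives exactly the coherence $\sigma_{m+1}\rstr\omega_{m+1}=\sigma_m$.

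Finally I would set $\pi=\bigcup_{m\geqslant n}\sigma_m$ and verify it is an automorphism of $\calA$ with the desired features. Because the $\sigma_m$ are coherent permutations of the $\omega_{m+1}$ and $\bigcup_{m\geqslant n}\omega_{m+1}=\omega_\omega$, the union $\pi$ is a permutation of $\omega_\omega$ with $\pi\rstr\omega_{m+1}=\sigma_m$ for each $m\geqslant n$. For every $k$ one reads off $\pi[\omega_k]=\omega_k$: for $k\leqslant n$ this is clear since $\pi\rstr\omega_n=\sigma_n\rstr\omega_n$ is the identity, and for $k>n$ it follows from $\pi\rstr\omega_{k+1}=\sigma_k$ together with $\sigma_k[\omega_k]=\omega_k$. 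To check the $h$-condition on a pair $\{\zeta_1,\zeta_2\}\in[\omega_{m+1}\setminus\omega_m]^2$, I would use that $h$ restricted to $[\omega_{m+1}\setminus\omega_m]^2$ is $h_m$, and that on $\omega_{m+1}$ the map $\pi$ agrees with the automorphism $\sigma_m$ (when $m\geqslant n$) or with the identity (when $m<n$, as then $\omega_{m+1}\subseteq\omega_n$); in either case, writing $h(\{\zeta_1,\zeta_2\})=\langle B,r\rangle$ with $B\subseteq\omega_m$, one gets $h(\{\pi\zeta_1,\pi\zeta_2\})=\langle\pi[B],r\rangle$, since $\pi[B]=\sigma_m[B]$ (resp. $\pi[B]=B$). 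Thus $\pi$ is an automorphism of $\calA$; it fixes $\omega_n$ pointwise because $\pi\rstr\omega_{n+1}=\sigma_n$ does, and $\pi(\xi)=\sigma_n(\xi)=\eta$.

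The main obstacle — indeed essentially the only point requiring genuine care — is the engine of the recursion: recognizing that an automorphism of $\calA_m$, regarded inside $\omega_{m+2}$, is a partial automorphism of $\calA_{m+1}$ whose domain is small enough ($|\dom|=\aleph_{m+1}<\aleph_{m+2}$) for Lemma~\ref{sh03} to apply, so that one level can be bridged to the next. Everything else is the bookkeeping of confirming that the coherent union inherits, level by level, the defining clauses of an automorphism of $\calA$.
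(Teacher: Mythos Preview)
Your proof is correct and follows essentially the same approach as the paper: start with $\mathrm{id}_{\omega_n}\cup\{\langle\xi,\eta\rangle\}$, extend it to an automorphism of $\calA_n$ via Lemma~\ref{sh03}, then repeatedly use Lemma~\ref{sh03} to climb from each $\calA_m$ to $\calA_{m+1}$, and take the union. The only difference is cosmetic (you index by $m\geqslant n$ rather than by $k\in\omega$ with $m=n+k$) and that you spell out in detail the verifications the paper leaves to ``clearly.''
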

\begin{proof}
Let $n\in\omega$ and let $\xi,\eta\in\omega_{n+1}\setminus\omega_n$ be such that $\xi\neq\eta$.
We define $\langle\pi_k\rangle_{k\in\omega}$ by recursion as follows.
\begin{itemize}[leftmargin=*]
\item By Lemma~\ref{sh03}, $\mathrm{id}_{\omega_n}\cup\{\langle\xi,\eta\rangle\}$
extends to an automorphism $\pi_0$ of $\calA_n$, where
$\mathrm{id}_{\omega_n}$ is the identity permutation of $\omega_n$.
\item Let $k\in\omega$ and assume that $\pi_k$ is an automorphism of $\calA_{n+k}$.
By Lemma~\ref{sh03}, $\pi_k$ extends to an automorphism $\pi_{k+1}$ of $\calA_{n+k+1}$.
\end{itemize}
Finally, we define
\[
\pi=\bigcup_{k\in\omega}\pi_k.
\]
Clearly, $\pi$ is an automorphism of $\calA$ which fixes $\omega_n$ pointwise and moves $\xi$ to $\eta$.
\end{proof}

Now, we construct our permutation model.
For each $\alpha<\omega_\omega$, we assign a new atom $a_\alpha$
and define the set $A$ of atoms by $A=\{a_\alpha\mid\alpha<\omega_\omega\}$.
However, for the sake of simplicity, we shall identify $A$ with $\omega_\omega$.
Now, let $\mathcal{G}$ be the group of all automorphisms of $\calA$,
let $\mathcal{G}_n=\{\pi\in\mathcal{G}\mid\forall\alpha<\omega_n(\pi(\alpha)=\alpha)\}$ for each $n\in\omega$,
and let $\mathfrak{F}$ be the normal filter on $\mathcal{G}$
generated by the subgroups $\mathcal{G}_n$ ($n\in\omega$) of $\mathcal{G}$.
Let $\mathcal{V}$ be the permutation model determined by $\mathcal{G}$ and $\mathfrak{F}$.
Clearly, $h\in\mathcal{V}$.

\begin{lemma}\label{sh05}
In $\mathcal{V}$, for every subset $C$ of $A$, there exists an $m\in\omega$
such that, for all $n\geqslant m$, either $\omega_{n+1}\setminus\omega_n\subseteq C$
or $C\cap(\omega_{n+1}\setminus\omega_n)=\varnothing$.
\end{lemma}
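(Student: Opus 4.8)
The plan is to extract from the filter $\mathfrak{F}$ a single ``support'' $\omega_m$ for $C$ and then exploit the abundance of automorphisms supplied by Lemma~\ref{sh04}. First I would record the structural fact about $\mathfrak{F}$: the generating subgroups $\mathcal{G}_n$ form a decreasing chain (since $\omega_m\subseteq\omega_n$ for $m\leqslant n$ forces $\mathcal{G}_n\subseteq\mathcal{G}_m$), and each $\mathcal{G}_n$ is normal in $\mathcal{G}$ (for $\pi\in\mathcal{G}$, $\sigma\in\mathcal{G}_n$, and $\alpha<\omega_n$ we have $\pi^{-1}(\alpha)\in\omega_n$ because $\pi[\omega_n]=\omega_n$, whence $\pi\sigma\pi^{-1}(\alpha)=\alpha$). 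Consequently the normal filter they generate consists of exactly those subgroups of $\mathcal{G}$ containing some $\mathcal{G}_m$. Since $C\in\mathcal{V}$ and $C\subseteq A$, we have $\symg(C)\in\mathfrak{F}$, so there is an $m\in\omega$ with $\mathcal{G}_m\subseteq\symg(C)$; that is, every automorphism of $\calA$ fixing $\omega_m$ pointwise fixes $C$ setwise.

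Next I would show that this $m$ witnesses the lemma. Fix $n\geqslant m$ and suppose toward a contradiction that the block $\omega_{n+1}\setminus\omega_n$ is neither contained in $C$ nor disjoint from it, so that we may choose $\xi\in(\omega_{n+1}\setminus\omega_n)\cap C$ and $\eta\in(\omega_{n+1}\setminus\omega_n)\setminus C$. By Lemma~\ref{sh04} there is an automorphism $\pi$ of $\calA$ that fixes $\omega_n$ pointwise and sends $\xi$ to $\eta$. Since $m\leqslant n$, such a $\pi$ also fixes $\omega_m$ pointwise, so $\pi\in\mathcal{G}_m\subseteq\symg(C)$ and hence $\pi C=C$. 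But $\xi\in C$ gives $\eta=\pi(\xi)\in\pi C=C$, contradicting $\eta\notin C$. Therefore for every $n\geqslant m$ the block $\omega_{n+1}\setminus\omega_n$ is either contained in $C$ or disjoint from $C$, as required.

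The argument is short once the two ingredients are in place, and I expect no serious obstacle. The only point demanding care is the reduction in the first paragraph: one must confirm that membership $\symg(C)\in\mathfrak{F}$ really collapses to the single inclusion $\mathcal{G}_m\subseteq\symg(C)$, which rests on the chain $\langle\mathcal{G}_n\rangle_{n\in\omega}$ being decreasing and on the normality of each $\mathcal{G}_n$ in $\mathcal{G}$. Everything after that is an immediate application of the homogeneity encoded in Lemma~\ref{sh04}, whose role is precisely to guarantee that any two points of a single block $\omega_{n+1}\setminus\omega_n$ can be interchanged by an automorphism fixing all the lower blocks pointwise.
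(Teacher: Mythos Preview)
Your argument is correct and is essentially identical to the paper's proof: choose $m$ with $\mathcal{G}_m\subseteq\symg(C)$, then for any $n\geqslant m$ with the block split use Lemma~\ref{sh04} to move a point of $C$ in that block to a point outside $C$, contradicting $\pi C=C$. Your extra justification that $\symg(C)\in\mathfrak{F}$ reduces to a single inclusion $\mathcal{G}_m\subseteq\symg(C)$ is fine (the paper simply asserts this), though note that your concluding remark about ``interchanging'' two points slightly overstates Lemma~\ref{sh04}, which only guarantees an automorphism sending $\xi$ to $\eta$; fortunately that is all your proof actually uses.
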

\begin{proof}
Let $C\in\mathcal{V}$ be a subset of $A$.
Since $\symg(C)\in\mathfrak{F}$, there exists an $m\in\omega$ such that $\mathcal{G}_m\subseteq\symg(C)$;
that is, every $\pi\in\mathcal{G}$ fixing $\omega_m$ pointwise also fixes $C$.
Assume toward a contradiction that there is an $n\geqslant m$ such that
$\omega_{n+1}\setminus\omega_n\nsubseteq C$ and $C\cap(\omega_{n+1}\setminus\omega_n)\neq\varnothing$.
Let $\xi,\eta\in\omega_{n+1}\setminus\omega_n$ be such that $\xi\in C$ and $\eta\notin C$.
By Lemma~\ref{sh04}, there is a $\pi\in\mathcal{G}$ which fixes $\omega_n$ pointwise and moves $\xi$ to $\eta$.
Since $\omega_m\subseteq\omega_n$, it follows that $\pi$ fixes $C$,
contradicting the fact that $\pi$ moves an element $\xi$ in $C$ to an element $\eta$ outside $C$.
\end{proof}

\begin{lemma}\label{sh06}
In $\mathcal{V}$, there exists a surjection from $[A]^2$ onto $\scrP(A)$.
\end{lemma}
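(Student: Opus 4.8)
The plan is to exhibit the surjection explicitly from $h$, by composing $h$ with a fixed decoding operation, and then to verify that the resulting function lies in $\mathcal{V}$ by checking that it is fixed by every automorphism in $\mathcal{G}$. First I would fix, for each $n\in\omega$, a decoding map $D_n:\scrP(\omega_n)\times\scrP(\omega)\to\scrP(A)$ given by
\[
D_n(\langle B,r\rangle)=B\cup\bigcup\{\omega_{k+1}\setminus\omega_k\mid k\geqslant n\text{ and }k\in r\},
\]
so that $D_n(\langle B,r\rangle)$ agrees with $B\subseteq\omega_n$ below $\omega_n$ and, above $\omega_n$, contains exactly those blocks $\omega_{k+1}\setminus\omega_k$ whose index $k$ lies in $r$. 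Using $h=\bigcup_{n\in\omega}h_n$, I then define $H:[A]^2\to\scrP(A)$ by $H(\{\xi,\eta\})=D_n(h(\{\xi,\eta\}))$ whenever $\xi$ and $\eta$ lie in a common block $\omega_{n+1}\setminus\omega_n$ (the index $n$ is then unique), and $H(\{\xi,\eta\})=\varnothing$ for all other pairs.

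Next I would establish surjectivity. Given a subset $C$ of $A$ in $\mathcal{V}$, Lemma~\ref{sh05} yields an $m\in\omega$ such that every block $\omega_{k+1}\setminus\omega_k$ with $k\geqslant m$ is either contained in $C$ or disjoint from $C$. Putting $B=C\cap\omega_m$ and $r=\{k\in\omega\mid k\geqslant m\text{ and }\omega_{k+1}\setminus\omega_k\subseteq C\}$, this block homogeneity above $\omega_m$ gives $D_m(\langle B,r\rangle)=C$. Since $h_m$ is a surjection onto $\scrP(\omega_m)\times\scrP(\omega)$ by Lemma~\ref{sh02}, there is a pair $\{\xi,\eta\}\in[\omega_{m+1}\setminus\omega_m]^2$ with $h(\{\xi,\eta\})=\langle B,r\rangle$, whence $H(\{\xi,\eta\})=C$. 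As every two-element set of atoms already belongs to $\mathcal{V}$, this shows $H$ maps $[A]^2$ onto every subset of $A$ present in $\mathcal{V}$.

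The step I expect to be the crux is verifying $H\in\mathcal{V}$. Membership first requires that each value $H(\{\xi,\eta\})$ itself lie in $\mathcal{V}$: for $H(\{\xi,\eta\})=D_n(\langle B,r\rangle)$ this holds because any $\pi\in\mathcal{G}_n$ fixes $\omega_n$ pointwise, hence fixes $B\subseteq\omega_n$, and satisfies $\pi[\omega_k]=\omega_k$ for all $k$, hence permutes each block onto itself; thus $\mathcal{G}_n\subseteq\symg(D_n(\langle B,r\rangle))\in\mathfrak{F}$. The key computation is that $H$ is fixed by every $\pi\in\mathcal{G}$. Here $\pi[\omega_k]=\omega_k$ guarantees that $\pi$ sends a block-pair to a pair in the same block and a cross-block pair to a cross-block pair, while the automorphism condition gives $h(\{\pi(\xi),\pi(\eta)\})=\langle\pi[B],r\rangle$ whenever $h(\{\xi,\eta\})=\langle B,r\rangle$. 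Applying $\pi$ to $D_n(\langle B,r\rangle)$ replaces $B$ by $\pi[B]$ and leaves the block-union unchanged (each block being $\pi$-invariant), which is precisely $D_n(\langle\pi[B],r\rangle)=H(\{\pi(\xi),\pi(\eta)\})$; the cross-block pairs all keep the value $\varnothing$. Hence $\pi H=H$ for every $\pi\in\mathcal{G}$, so $\symg(H)=\mathcal{G}\in\mathfrak{F}$ and $H\in\mathcal{V}$.

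The main obstacle is getting this invariance identity exactly right, and in particular confirming that the decoding index $n$ is read off the automorphism-invariant block structure rather than from the specific pair, so that the two sides $\pi\bigl(D_n(h(\{\xi,\eta\}))\bigr)$ and $D_n\bigl(h(\{\pi(\xi),\pi(\eta)\})\bigr)$ really do use the same $n$. Once this is secured, combining the surjectivity of the previous paragraph with $H\in\mathcal{V}$ shows that $\mathcal{V}$ believes $H$ to be a surjection from $[A]^2$ onto $\scrP(A)$, which is the assertion of the lemma.
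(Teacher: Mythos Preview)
Your proof is correct and follows essentially the same approach as the paper: the paper defines the same map (writing the block union as $\bigcup_{k\in r}(\omega_{n+k+1}\setminus\omega_{n+k})$, a shift of the index rather than your restriction $k\geqslant n$, which is a cosmetic difference), and then appeals to $h\in\mathcal{V}$ and Lemma~\ref{sh05} for the conclusion. Your explicit verification that $\pi H=H$ for all $\pi\in\mathcal{G}$ spells out what the paper leaves implicit in the line ``since $h\in\mathcal{V}$, it follows that $\Phi\in\mathcal{V}$''.
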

\begin{proof}
Let $\Phi$ be the function on $[A]^2$ defined by
\[
\Phi(\{\xi,\eta\})=B\cup\bigcup_{k\in r}(\omega_{n+k+1}\setminus\omega_{n+k}),
\]
if $\{\xi,\eta\}\in[\omega_{n+1}\setminus\omega_n]^2$ for some $n\in\omega$
and $h(\{\xi,\eta\})=\langle B,r\rangle$;
otherwise, $\Phi(\{\xi,\eta\})=\varnothing$.
Since $h\in\mathcal{V}$, it follows that $\Phi\in\mathcal{V}$.
Now, it follows from Lemma~\ref{sh05} that, in $\mathcal{V}$,
$\Phi$ is a surjection from $[A]^2$ onto $\scrP(A)$.
\end{proof}

Now the next theorem immediately follows from Lemma~\ref{sh06} and the Jech--Sochor theorem.
\begin{theorem}\label{sh07}
It is consistent with $\mathsf{ZF}$ that there exists an infinite set $A$ such that $[A]^2$ maps onto $\scrP(A)$.
\end{theorem}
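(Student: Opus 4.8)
The plan is to obtain the theorem as a direct consequence of Lemma~\ref{sh06} together with the Jech--Sochor transfer theorem. In the permutation model $\mathcal{V}$ constructed above, the set $A$ of atoms is infinite, and Lemma~\ref{sh06} exhibits an explicit surjection $\Phi$ from $[A]^2$ onto $\scrP(A)$. Hence $\mathcal{V}$ satisfies the sentence ``there exists an infinite set $A$ such that $[A]^2$ maps onto $\scrP(A)$.'' Since $\mathcal{V}$ is a model of $\mathsf{ZFA}$ and not of $\mathsf{ZF}$, the only remaining task is to move this single existential statement from $\mathcal{V}$ into a genuine model of $\mathsf{ZF}$.

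First I would recall the precise content of the Jech--Sochor theorem: for the permutation model $\mathcal{V}$ and any ordinal $\alpha$ there is a symmetric extension $\mathcal{N}$ of the ground model---a model of $\mathsf{ZF}$---together with an $\in$-preserving embedding of the iterated power set $\scrP^\alpha(A)$ of $\mathcal{V}$ into the corresponding $\scrP^\alpha(\tilde A)$ of $\mathcal{N}$, and this embedding faithfully reflects the truth of statements that can be localized to $\scrP^\alpha(A)$. Accordingly, the key step is to check that the sentence
\[
\exists A\,\bigl(A\text{ is infinite and }\exists\Psi\,(\Psi:[A]^2\twoheadrightarrow\scrP(A))\bigr)
\]
is \emph{boundable}, that is, that its witnesses may be taken to lie in $\scrP^\alpha(A)$ for some fixed finite $\alpha$. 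The natural witness is the pair $\langle A,\Phi\rangle$ supplied by Lemma~\ref{sh06}: each $2$-element subset of $A$ lies in $\scrP(A)$, each value $\Phi(\{\xi,\eta\})$ lies in $\scrP(A)$, and so $\Phi$, being a set of ordered pairs of such objects, sits only finitely many ranks above the atoms; choosing $\alpha$ to exceed the rank of $\Phi$ over $A$ renders the sentence boundable in the required sense.

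The main point to verify carefully is exactly this boundedness bookkeeping: one must confirm that the quantifiers over subsets of $A$, over elements of $[A]^2$, and over the surjection $\Phi$ itself all range within $\scrP^\alpha(A)$ for the chosen $\alpha$, so that ``$\Phi$ is a surjection onto $\scrP(A)$'' is preserved by the embedding into $\mathcal{N}$. Granting this, the Jech--Sochor theorem gives that $\mathcal{N}$ satisfies ``there exists an infinite set $A$ such that $[A]^2$ maps onto $\scrP(A)$,'' which is precisely the asserted consistency with $\mathsf{ZF}$. I expect no further obstacle, since every other ingredient---the construction of $h$ in Lemma~\ref{sh02}, the ultrahomogeneity of $\calA$ secured by Lemma~\ref{sh03} and Lemma~\ref{sh04}, and the support argument of Lemma~\ref{sh05}---has already been established in the course of proving Lemma~\ref{sh06}.
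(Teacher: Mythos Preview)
Your proposal is correct and follows exactly the paper's approach: the paper's own proof is the single sentence that the theorem ``immediately follows from Lemma~\ref{sh06} and the Jech--Sochor theorem,'' and you have simply unpacked the boundability verification that the paper leaves implicit. Nothing more is needed.
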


As a corollary of Theorem~\ref{sh07}, we obtain a negative answer to the dual Specker problem.

\begin{corollary}
It is consistent with $\mathsf{ZF}$ that there exists an infinite set $A$ such that $A^2$ maps onto $2^A$.
\end{corollary}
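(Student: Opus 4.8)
The plan is to deduce this directly from Theorem~\ref{sh07} by composing surjections, with no further permutation-model work required. By Theorem~\ref{sh07}, it is consistent with $\mathsf{ZF}$ that there is an infinite set $A$ together with a surjection $\Phi:[A]^2\twoheadrightarrow\scrP(A)$; I would fix such a model of $\mathsf{ZF}$ and, working inside it, manufacture from $\Phi$ a surjection $A^2\twoheadrightarrow 2^A$.

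First I would exhibit a surjection $\Psi:A^2\twoheadrightarrow[A]^2$ in $\mathsf{ZF}$. The natural candidate sends each ordered pair $\langle\xi,\eta\rangle$ with $\xi\neq\eta$ to the unordered pair $\{\xi,\eta\}$; since every member of $[A]^2$ is a two-element set $\{a,b\}$ and is therefore the image of $\langle a,b\rangle$, the off-diagonal pairs already cover all of $[A]^2$. It then remains only to assign a value on the diagonal $\{\langle\xi,\xi\rangle\mid\xi\in A\}$, which one can do by fixing any single element of $[A]^2$ (such an element exists because $A$ is infinite, hence has at least two members) and sending the entire diagonal to it. This prescription is completely explicit and uses no form of choice, so $\Psi$ exists in the ambient model.

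Then the composition $\Phi\circ\Psi:A^2\to\scrP(A)$ is a surjection, being a composition of two surjections, and since $2^A$ denotes the power set $\scrP(A)$, this is exactly a surjection from $A^2$ onto $2^A$. As $A$ is infinite, this witnesses the statement in the model, giving the desired consistency. I expect no genuine obstacle here: the entire mathematical content sits in Theorem~\ref{sh07}, and the passage from $[A]^2$ to $A^2$ rests only on the elementary observation, already recorded in the introduction, that there is a canonical surjection from $A^2$ onto $[A]^2$. The sole point warranting attention is that both $\Psi$ and the composition be definable without the axiom of choice, which the explicit construction above guarantees.
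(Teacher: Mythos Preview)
Your proposal is correct and matches the paper's approach: the paper gives no explicit proof of the corollary, treating it as immediate from Theorem~\ref{sh07} together with the observation (already noted in the introduction) that there is a surjection from $A^2$ onto $[A]^2$. Your write-up simply spells out this trivial reduction, including the harmless detail of where to send the diagonal.
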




\normalsize

\end{document}